\documentclass[11pt,reqno]{amsart}
\usepackage{graphicx,amsmath,amsthm,verbatim,amssymb,enumerate}
\usepackage{xcolor}
\definecolor{darkgreen}{rgb}{0,0.75,0}
\definecolor{darkred}{rgb}{0.75,0,0}
\definecolor{darkmagenta}{rgb}{0.5,0,0.5}

\usepackage[colorlinks,citecolor=darkgreen,linkcolor=darkred,urlcolor=darkmagenta]{hyperref}

\bibliographystyle{abbrvnat}
\pdfoutput=0 

\addtolength{\oddsidemargin}{-.2in}
\addtolength{\evensidemargin}{-.2in}
\addtolength{\textwidth}{.4in}

\newcommand{\mr}[1]{{\tt \href{http://www.ams.org/mathscinet-getitem?mr=#1}{MR#1}}}
\newcommand{\arxiv}[1]{{\tt \href{http://arxiv.org/abs/#1}{arXiv:#1}}}

\newcommand{\Sett}[2]{\left\{ #1 \; : \; #2 \right\}}

\newcommand{\old}[1]{}

\newcommand{\abs}[1]{{\left\vert\kern-0.25ex #1
    \kern-0.25ex\right\vert}}
\newcommand{\df}{{d_f}} 
\newcommand{\dw}{{d_w}} 
\DeclareRobustCommand{\SkipTocEntry}[5]{}

\hyphenation{qua-si-ran-dom}
\hyphenation{Schutz-en-ber-ger}
\hyphenation{com-mut-at-iv-ity}

\newtheorem{theorem}{Theorem}[section]
\newtheorem{prop}[theorem]{Proposition}

\newtheorem{lemma}[theorem]{Lemma}

\theoremstyle{remark}
\newtheorem*{remark}{Remark}

\numberwithin{counter}{section}

\theoremstyle{definition}
\newtheorem{definition}[theorem]{Definition}
\def\L{\mathcal L}


\def\00{\mathbf{0}}

\def\supp{\operatorname{supp}}


\def\F{\mathcal{D}}
\def\E{\mathcal{E}}

\def\N{\mathbb{N}}

\def\R{\mathbb{R}}
\def\EE{\mathbb{E}}
\def\PP{\mathbb{P}}

\newcommand\norm[1]{\left\lVert#1\right\rVert} 
\def\CS{\operatorname{CS}}
\def\CSA{\operatorname{CSA}}

\begin{document}
\title{ Davies' method for anomalous diffusions}
\author[M. Murugan]{Mathav Murugan}
\address{Department of Mathematics, University of British Columbia and Pacific Institute for the Mathematical Sciences, Vancouver, BC V6T 1Z2, Canada.}
\email[M. Murugan]{mathav@math.ubc.ca}

\author[L. Saloff-Coste]{Laurent Saloff-Coste$^\dagger$}
\thanks{$\dagger$Both the authors were partially supported by NSF grant DMS 1404435}
\address{Department of Mathematics, Cornell University, Ithaca, NY 14853, USA}
\email[L. Saloff-Coste]{lsc@math.cornell.edu}
\date{\today}
\subjclass[2010]{60J60, 60J35}
\begin{abstract}
Davies' method of perturbed semigroups is a classical technique to obtain off-diagonal upper bounds on the heat kernel.
However Davies' method does not apply to anomalous diffusions due to the singularity of energy measures.
In this note, we overcome the difficulty by modifying the Davies' perturbation method to obtain sub-Gaussian upper bounds on the heat kernel.
Our computations closely follow the seminal work of Carlen, Kusuoka and Stroock \cite{CKS}. However, a cutoff Sobolev inequality due to Andres and Barlow \cite{AB} is used to bound the energy measure.
\end{abstract}
\maketitle
\section{Introduction}
Davies' method of perturbed semigroups is a well-known method to
obtain off-diagonal upper bounds on the heat kernel. It was
introduced  by E.\ B.\ Davies to obtain the explicit constants in
the exponential term for Gaussian upper bounds \cite{Dav1} using the
logarithmic-Sobolev inequality. Davies' method was extended by
Carlen, Kusuoka and Stroock to a non-local setting \cite[Section 3]{CKS} using
Nash inequality. Moreover, Davies extended this technique to higher order elliptic operators on $\R^n$ \cite[Section 6 and 7]{Dav2}.
More recently Barlow, Grigor'yan and Kumagai
applied Davies' method as presented in \cite{CKS} to obtain
off-diagonal upper bounds for the heat kernel of heavy tailed jump processes
\cite[Section 3]{BGK}.

Despite these triumphs, Davies' perturbation method
has not yet been made to work in the following contexts:
\begin{enumerate}[(a)]
\item Anomalous diffusions (See \cite[Section 4.2]{Bar}).
\item  Jump processes with jump
index greater than or equal to $2$ (See \cite[Remark 1(d)]{MS} and \cite[Section 1]{GHL}).
\end{enumerate}
 The goal of this work is to extend Davies' method to anomalous
diffusions in order to obtain sub-Gaussian upper bounds. In the anomalous
diffusion setting, we use cutoff functions satisfying a cutoff
Sobolev inequality to perturb the corresponding heat semigroup. We use a
recent work of Andres and Barlow \cite{AB} to construct these cutoff
functions. We extend the techniques developed here in a sequel to a non-local setting for the jump processes mentioned in (b) above \cite{MS4}. In \cite{MS4}, we consider the analogue of symmetric stable processes on fractals while in this work we are motivated by Brownian motion on fractals.

Before we proceed, we briefly outline Davies' method as presented in \cite{CKS} and point out  the main difficulty in extending it to the anomalous diffusion setting.
Consider a metric measure space $(M,d,\mu)$ and a Markov semigroup $(P_t)_{t \ge 0}$ symmetric with respect to $\mu$. The most classical case is that of the heat semigroup in $\R^n$ (corresponding to Brownian motion in $\R^n$) associated with the Dirichlet form $\E(f,f)= \int_{\R^n} \abs{\nabla f}^2 \,d\mu$, where $\mu$ is the Lebesgue measure.

Instead of considering the original Markov semigroup $(P_t)_{t \ge 0}$,  we consider the perturbed semigroup
\begin{equation} \label{e-pet}
 \left( P^\psi_t f\right) (x)= e^{\psi(x)} \left( P_t \left( e^{-\psi} f \right) \right)(x)
\end{equation}
where $\psi$ is a `sufficiently nice function'.
Given an ultracontractive estimate  \begin{equation}\label{e-ult}\norm{P_t}_{1 \to \infty} \le m(t)\end{equation} for the diffusion semigroup, Davies' method yields an ultracontractive estimate for the perturbed semigroup
\begin{equation}\label{e-pult}
 \norm{P^\psi_t}_{1 \to \infty} \le m_\psi(t).
\end{equation}
If $p_t(x,y)$ is the kernel of $P_t$,  then the kernel of $P^\psi_t$ is $p_t^\psi(x,y) = e^{-\psi(x)}p_t(x,y) e^{\psi(y)}$.
Therefore by \eqref{e-pult}, we obtain the off-diagonal estimate
\begin{equation} \label{e-offd}
p_t(x,y) \le m_\psi(t) \exp\left( \psi(y)-\psi(x) \right).
\end{equation}
By varying $\psi$ over a class of `nice functions' to minimize the right hand side of \eqref{e-offd}, Davies obtained off-diagonal upper bounds.
In Davies' method as presented in \cite{Dav1,CKS}, it is crucial that the function $\psi$  satisfies
\[
 e^{-2\psi} \Gamma(e^\psi,e^\psi) \ll \mu \hspace{0.5cm} \mbox{and} \hspace{0.5cm}  e^{2\psi} \Gamma(e^{-\psi},e^{-\psi}) \ll \mu,
\]
where $\Gamma(\cdot,\cdot)$ denotes the corresponding energy measure (cf. Definition \ref{d-energy}). For the classical example of heat semigroup in $\R^n$ described above, the energy measure $\Gamma(f,g)$ is $\nabla f. \nabla g \,d\mu$, where $\mu$ is the Lebesgue measure.

 In fact the expression of $m_\psi$ in \eqref{e-pult} depends on the uniform bound on the Radon-Nikodym derivatives of the energy measure given by (See \cite[Theorem 3.25]{CKS})
 \[ \Gamma(\psi) := \norm{\frac{d e^{-2\psi} \Gamma(e^\psi,e^\psi) }{d \mu}}_\infty \vee\norm{\frac{d  e^{2\psi} \Gamma(e^{-\psi},e^{-\psi}) }{d\mu}}_\infty .\]

The main difficulty  in extending Davies' method to anomalous diffusions is that,  for many `typical fractals' that satisfy a sub-Gaussian estimate,
the energy measure $\Gamma( \cdot, \cdot)$
is singular with respect to the underlying symmetric measure $\mu$ \cite{Hin,Kus,BST}.
This difficulty is well-known to experts (for instance, \cite[p. 1507]{BB} or \cite[p. 86]{Kum}).
In this context, the condition $e^{-2\psi} \Gamma(e^\psi,e^\psi) \ll \mu$ implies that $\psi$ is
necessarily a constant,  in which case the off-diagonal estimate of \eqref{e-offd} is not an improvement over the diagonal estimate \eqref{e-pult}.

We briefly recall some fundamental notions regarding Dirichlet form and refer the reader to \cite{FOT} for details.
Let $(M,d,\mu)$ be a locally compact metric measure space where $\mu$ is a positive Radon measure on $M$ with $\supp(\mu)=M$.  We denote by $\langle \cdot, \cdot \rangle$ the inner product on $L^2(M,\mu)$.
Let $ \mathbf{X}= (\Omega, \mathcal{F}_\infty,\mathcal{F}_t,X_t, \PP_x)$ denote the diffusion corresponding to a strongly, local regular Dirichlet form. Here $\Omega$ denotes the totality of right continuous paths with left-limits from $[0,\infty)$ to $M$ and $\PP_x$ denotes the law of the process conditioned to start  at $X_0=x$.
The corresponding Markov semigroup $\Sett{P_t} {t \ge 0}$ of $\mathbf{X}$ is defined by
$P_tf(x):= \EE_x[f(X_t)]$, where $\EE_x$ denote the expectation with respect to the measure $\PP_x$.
These
operators  $\Sett{P_t} {t \ge 0}$ form a strongly continuous semigroup of self-adjoint contractions. The
Dirichlet form $(\E,\F)$ associated with $\mathbf{X}$ is the symmetric, bilinear form
\[ \E(u,v):= \lim_{t \downarrow 0} \frac{1}{t}\langle u - P_t u, v \rangle
\]
defined on the domain
\[\F  := \Sett{u \in L^2(M,\mu)}{ \sup_{t >0}
 \frac{1}{t}\langle u - P_t u,u\rangle < \infty}.\]

Recall that a Dirichlet form $(\E,\F)$ on $L^2(M,\mu)$ is said to be \emph{regular} if $C_c(M) \cap \F$ is dense in both $(C_c(M), \norm{\cdot}_\infty)$ and the Hilbert space $(\F,\E_1)$.
Here $C_c(M)$ is the space of continuous functions with compact
support in $M$ and $\E_1(\cdot, \cdot):= \E(\cdot, \cdot)+ \langle \cdot,\cdot \rangle$ denotes the inner product on $\F$. For a $\mu$-measurable function $u$ let  $\operatorname{Supp}[u]$ denote the support of the measure $u \,d\mu$.
 We say that a Dirichlet form $(\E,\F)$ on $L^2(M,\mu)$ to be \emph{strongly local} if it satisfies the following property: For all functions $u,v \in \F$ such that $\operatorname{Supp}[u]$,  $\operatorname{Supp}[u]$ are compact and $v$ is constant on a neighbourhood of $\operatorname{Supp}[v]$, we have $\E(u,v)=0$. For example, the form corresponding to the heat semigroup on $\R^n$ defined by $(f \mapsto \int_{\R^n} \abs{\nabla f}^2 \,d\mu, W^{1,2}(\R^n))$ is a regular, strongly local Dirichlet form on $L^2(\R^n,\mu)$, where $\mu$ is the Lebesgue measure and $W^{1,2}$ denotes the Sobolev space.

We denote by $B(x,r):= \{y \in M: d(x,y)<r \}$  the ball centered at $x$ with radius $r$ and by
\[
 V(x,r):= \mu(B(x,r))
\]
the corresponding volume. We assume that the metric measure space is Ahlfors-regular: meaning that there exist $C_1>0$ and $\df >0$ such that
\begin{equation} \label{ahl}\tag*{$\operatorname{V}(\df)$}
 C_1^{-1} r^\df \le V(x,r) \le C_1 r^\df
\end{equation}
for all $x \in M$ and for all $r \ge 0$. The quantity $\df >0$ is called the \emph{volume growth exponent} or \emph{fractal dimension}.
Let  $p_t(\cdot,\cdot)$ be the (regularised) kernel  of $P_t$ with respect to $\mu$ \cite[eq.~(1.10)]{AB}.
We are interested in obtaining sub-Gaussian upper bounds of the form
\begin{equation} \tag*{$\operatorname{USG}(\df,\dw)$}
\label{usg} p_t(x,y) \le \frac{C_1}{t^{\df/\dw}} \exp \left( -C_2 \left( \frac{d(x,y)^\dw}{t}\right)^{1/(\dw-1)} \right)
\end{equation}
where $\dw \ge 2$ is the \emph{escape time exponent} or \emph{walk dimension}.
It is known that if the heat kernel $p_t$ satisfies \ref{usg}, then $d_w \ge 2$ (cf. \cite[p. 252]{Hin2}). The corresponding diffusion $X_t$ then has a diffusive speed of at least $t^{1/d_w}$ (up to constants).  This means that a process starting at $x$ first exits a ball $B(x,r)$ at the time $\tau_{B(x,r)} \gtrsim r^{d_w}$ (cf. \cite[Lemma 5.3]{AB}). Moreover, if the process satisfies a matching sub-Gaussian lower bound for $p_t$ with different constants, then $\tau_{B(x,r)} \asymp r^{d_w}$. For comparison,  recall that the Brownian motion on Euclidean space has a Gaussian heat kernel and satisfies $\tau_{B(x,r)} \asymp r^{2}$.

Such sub-Gaussian estimates  are typical of many fractals (cf. \cite[Theorem 8.18]{Bar0}).
We assume the on-diagonal bound corresponding to the sub-Gaussian estimate of \ref{usg}. That is, we assume that there exists $C_1>0$ such that
\begin{equation}
\label{e-diag} p_t(x,x) \le \frac{C_1}{t^{\df/\dw}}
\end{equation}
for all $x \in M$ and for all $t>0$.
The on-diagonal estimate of \eqref{e-diag} is equivalent to the following Nash inequality (\cite[Theorem 2.1]{CKS}): there exists $C_N>0$ such that
\begin{equation} \label{nash} \tag*{$\operatorname{N}(\df,\dw)$}
\norm{f}_2^{2(1+ \dw/\df)} \le C_N \E(f,f) \norm{f}_1^{2\dw/\df}
\end{equation}
for all $f \in \F \cap L^1(M,\mu)$. The Nash inequality \ref{nash} may be replaced by an equivalent Sobolev inequality, a logarithmic Sobolev inequality or  a Faber-Krahn inequality (See \cite{BCLS}).
However, we will follow the approach of \cite{CKS} and use the Nash inequality. Such a Nash inequality can be obtained from geometric assumptions like a Poincar\'{e} inequality and a volume growth assumption like \ref{ahl}.

Since $\E$ is regular, it follows that $\E(f,g)$ can be written in terms of a signed measure $\Gamma(f,g)$ as
\[
 \E(f,g) = \int_M \Gamma (f,g),
\]
where the energy measure $\Gamma$ is defined as follows.
\begin{definition}\label{d-energy}
For any essentially bounded $f \in \F$, $\Gamma(f,f)$ is the unique Borel measure on $M$ (called the energy measure) on $M$ satisfying
\[
 \int_M g \,d\Gamma(f,f) = \E(f,fg) -\frac{1}{2} \E(f^2,g)
\]
for all essentially bounded $g \in \F \cap C_c(M)$; $\Gamma(f,g)$ is then defined by polarization.
\end{definition}
We shall use the following properties of the energy measure.
\begin{enumerate}
 \item[(i)] \emph{Locality}: For all functions $f,g \in \F$ and all measurable sets $G \subset M$ on which $f$ is constant
 \[
  \mathbf{1}_G d\Gamma(f,g) = 0
 \]
 \item[(ii)] \emph{Leibniz and chain rules}: For $f,g \in \F$ essentially bounded and $\phi \in C^1(\R)$,
 \begin{eqnarray}
 \label{e-lei} d\Gamma(fg,h) &=& f d\Gamma(g,h)+g d\Gamma(f,h)\\
 \label{e-cha} f \Gamma(\phi(f),g) &=& \phi'(f) d\Gamma(f,g).
 \end{eqnarray}
\end{enumerate}
We wish to obtain an off-diagonal estimate using Davies' perturbation method.
The main difference from the previous implementations of the method is that,
in addition to an on-diagonal upper bound (or equivalently Nash inequality), we also require a cutoff Sobolev inequality.
Spaces satisfying the sub-Gaussian upper bound given in \ref{usg}
 necessarily satisfy the cutoff Sobolev annulus inequality \ref{csa},
 a condition introduced by Andres and Barlow \cite{AB}.
The condition $\CSA$ simplifies the cut-off Sobolev inequalities $\CS$ which were originally introduced by Barlow and Bass \cite{BB} for weighted graphs.
The significance of the cut-off Sobolev inequalities $\CS$ and $\CSA$ is that they are stable under bounded perturbations of the Dirichlet form (Cf. \cite[Corollary 5.2]{AB}).
Moreover, the condition $\CS$ is stable under quasi-isometries (rough isometries) of the underlying space \cite[Theorem 2.21(b)]{BBK}.
Therefore cutoff Sobolev inequalities provide a robust method to obtain heat kernel estimates with anomalous time-space scaling. We now define the cutoff Sobolev inequality \ref{csa}.
\begin{definition}
 Let $U \subset V$ be open sets in $M$ with $U \subset \bar{U} \subset V$. We say that a continuous function $\phi$ is a cutoff function for $U \subset V$ if  $\phi \equiv 1$ on
 $U$ and $\phi \equiv 0$ on $V^c$.
\end{definition}
\begin{definition} (\cite[Definition 1.10]{AB}) \label{d-csa}
We say \ref{csa} holds if there exists $C_1,C_2>0$ such that for every $x \in M$, $R > 0$, $r>0$, there exists a cutoff function $\phi$ for  $B(x,R) \subset B(x,R+r)$ such that if $f \in \F$, then
\begin{equation} \label{csa}  \tag*{$\operatorname{CSA}(\dw)$}
 \int_U f^2 \, d \Gamma(\phi,\phi) \le C_1 \int_U \phi^2 \, d\Gamma(f,f) + \frac{C_2}{r^\dw} \int_U f^2 \, d\mu,
\end{equation}
where $U= B(x,R+r) \setminus \overline{B(x,r)}$.
\end{definition}
It is clear that the condition $\CSA(\dw)$ is preserved by bounded perturbations of the Dirichlet form.
The above definition is slightly different to the one introduced in \cite[Definition 1.10]{AB}, where the constant  $C_1$ is taken to be $1/8$. However both definitions are equivalent due to a `self-improving' property of $\CSA(\dw)$ \cite[Lemma 5.1]{AB}.

Our main result is that the Nash inequality \ref{nash} and the cutoff Sobolev inequality \ref{csa} imply the desired sub-Gaussian estimate \ref{usg}.
By  \cite[Theorem
1.12]{AB}, it is known that both \ref{nash} and  \ref{csa} are also necessary for 
the  sub-Gaussian estimate \ref{usg} to hold .
 More precisely,
\begin{theorem} \label{t-main}
 Let $(M,d,\mu)$ be a locally compact metric measure space that satisfies \ref{ahl} with volume growth exponent $\df$.
 Let $(\E,\F)$ be a strongly local, regular, Dirichlet form whose energy measure $\Gamma$ satisfies the cutoff Sobolev inequality \ref{csa} for some $d_w \ge 2$.
 Then the Nash inequality \ref{nash} implies the sub-Gaussian upper bound \ref{usg}.
\end{theorem}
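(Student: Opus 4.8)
The plan is to run Davies' perturbation method exactly in the Nash-based form of \cite{CKS}, with the single crucial change that the a priori bound on the Radon--Nikodym derivative of $\Gamma$ used there (and in every earlier version of the method), which is unavailable here because $\Gamma\perp\mu$, is replaced throughout by the cutoff Sobolev inequality \ref{csa}. First I would dispose of the easy reductions: by \cite[Theorem 2.1]{CKS} the Nash inequality \ref{nash} is equivalent to \eqref{e-diag} and hence to $\norm{P_t}_{1\to\infty}\le C t^{-\df/\dw}$, and since $p_t(x_0,y)\le\sqrt{p_t(x_0,x_0)p_t(y,y)}$ the case $t\ge d(x_0,y)^\dw$ of \ref{usg} is immediate. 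So fix $x_0,y\in M$, write $D=d(x_0,y)$, assume $t<D^\dw$, pick parameters $r\le D/2$ and $\lambda>0$, and --- invoking the self-improving property of \ref{csa} (\cite[Lemma 5.1]{AB}) --- choose a cutoff function $\phi$ for $B(x_0,r)\subset B(x_0,2r)$ for which the first constant in \ref{csa} is a prescribed small $\varepsilon$, at the price of a second constant $C_2(\varepsilon)$ that blows up as $\varepsilon\downarrow 0$. Set $\psi:=\lambda\phi\in\F\cap L^\infty$, so that $\psi(x_0)=\lambda$ and $\psi\equiv 0$ at $y$.

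The heart of the matter is the energy estimate for the perturbed semigroup. With $u_t:=P^\psi_t f$ for $f\in L^1\cap L^2$, strong locality and the chain and Leibniz rules \eqref{e-lei}--\eqref{e-cha} give $\E(e^{-\psi}u_t,e^{\psi}u_t)=\E(u_t,u_t)-\int_M u_t^2\,d\Gamma(\psi,\psi)$, whence
\[
\frac{d}{dt}\norm{u_t}_2^2=-2\,\E(u_t,u_t)+2\int_M u_t^2\,d\Gamma(\psi,\psi).
\]
Here is where \ref{csa} does the work that the Radon--Nikodym bound cannot: since $d\Gamma(\psi,\psi)=\lambda^2\,d\Gamma(\phi,\phi)$ sits in the annulus $B(x_0,2r)\setminus B(x_0,r)$ and $\phi\le 1$, applying \ref{csa} to $f=u_t$ gives $\int u_t^2\,d\Gamma(\psi,\psi)\le\varepsilon\lambda^2\,\E(u_t,u_t)+C_2(\varepsilon)\lambda^2 r^{-\dw}\norm{u_t}_2^2$; taking $\varepsilon\le 1/(2\lambda^2)$ absorbs the first term into $-2\E(u_t,u_t)$ and leaves
\[
\frac{d}{dt}\norm{u_t}_2^2\le -\E(u_t,u_t)+b\,\norm{u_t}_2^2,\qquad b:=C_2(\varepsilon)\lambda^2 r^{-\dw}.
\]

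From here I would follow \cite{CKS} essentially verbatim: combining this differential inequality with \ref{nash}, integrating, and using $L^1\!\to\!L^2$/$L^2\!\to\!L^\infty$ duality together with $(P^\psi_t)^*=P^{-\psi}_t$ --- and the companion bound $\norm{e^{\psi}P_t(e^{-\psi}f)}_1\le e^{bt}\norm f_1$, obtained from $L e^{\pm\psi}\le b\,e^{\pm\psi}$ in the weak sense (again through \ref{csa}) so that no stray factor $e^{\|\psi\|_\infty}$ enters --- yields $\norm{P^\psi_t}_{1\to\infty}\le C t^{-\df/\dw}e^{bt}$, hence by $p^\psi_t(x_0,y)=e^{-\psi(x_0)}p_t(x_0,y)e^{\psi(y)}$,
\[
p_t(x_0,y)\le C t^{-\df/\dw}\exp\!\big(C_2(\varepsilon)\lambda^2 r^{-\dw}t-\lambda\big).
\]
Finally I would optimize: take $r\asymp D$ and, writing $C_2(\varepsilon)\asymp\varepsilon^{-\beta}$ so that $b\asymp\lambda^{2\beta+2}D^{-\dw}$ when $\varepsilon\asymp\lambda^{-2}$, minimize over $\lambda$ to get $\lambda\asymp(D^\dw/t)^{1/(2\beta+1)}$ and therefore $p_t(x_0,y)\le C t^{-\df/\dw}\exp(-c(D^\dw/t)^{1/(2\beta+1)})$; the point is that the quantitative self-improvement of \ref{csa} in \cite[Lemma 5.1]{AB} is precisely strong enough to give $2\beta+1=\dw-1$, which is \ref{usg}. (For $\dw=2$ this forces $\beta=0$, i.e. the first constant of \ref{csa} may be taken arbitrarily small at no cost, and one recovers the classical Gaussian bound.)

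The steps that are genuinely routine are the identity for $\E(e^{-\psi}u,e^{\psi}u)$, the passage from \ref{nash} plus the differential inequality to ultracontractivity of $P^\psi_t$, and the one-variable optimization. The real difficulty --- the reason Davies' method had resisted the anomalous-diffusion setting --- is the control of $\int u_t^2\,d\Gamma(\psi,\psi)$: with $\Gamma$ singular there is no uniform Radon--Nikodym bound, so this term must be absorbed against $\E(u_t,u_t)$ via $\CSA(\dw)$, and it must be done with the \emph{sharp} dependence on the smallness parameter $\varepsilon$, since it is that dependence --- not anything in the Davies/Nash machinery --- that decides whether the final exponent comes out as the correct $1/(\dw-1)$ or as something weaker. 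A secondary, related point requiring care is keeping the $L^1$-growth of $e^{\psi}P_t(e^{-\psi}f)$ down to the rate $e^{bt}$ governed by the same quantity $b$, rather than the useless $e^{\|\psi\|_\infty}$, which would cancel the gain $e^{-\lambda}$.
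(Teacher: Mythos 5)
Your proposal gets the right big picture --- replace the Radon--Nikodym bound on $d\Gamma(\psi,\psi)/d\mu$ by \ref{csa}, use the self-improving property to make the first constant small, and observe that the blow-up rate $C_2(\varepsilon)\asymp\varepsilon^{-(\dw-2)/2}$ of the second constant is exactly what turns $b\,t-\lambda$ into the exponent $1/(\dw-1)$ after optimizing in $\lambda$; that bookkeeping agrees with the paper's $\delta=C_0(\lambda/r)^{\dw}$. But the route you choose to get ultracontractivity of $P_t^\psi$ has a genuine gap. You run only the $L^2$ differential inequality and then invoke the ``companion bound'' $\norm{e^{\psi}P_t(e^{-\psi}f)}_1\le e^{bt}\norm{f}_1$, claiming it follows from $\L e^{\pm\psi}\le b\,e^{\pm\psi}$ ``again through \ref{csa}''. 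This is precisely what \ref{csa} cannot deliver: the inequality controls $\int f^2\,d\Gamma(\phi,\phi)$ only at the price of a term $C_1\int \phi^2\,d\Gamma(f,f)$, which can be absorbed into $\E(f^p,f^p)$ in an $L^{2p}$ energy computation but has no counterpart in an $L^1$ computation (testing $\E(e^\psi,g)=\int e^\psi\,d\Gamma(\psi,g)$ against $g\ge0$ produces, after Cauchy--Schwarz, a full energy term $\int d\Gamma(g,g)$ that cannot be dominated by $b\int e^\psi g\,d\mu$ when $\Gamma(\psi,\psi)\perp\mu$). Without the $L^1$ growth bound, the ``Nash inequality + $\norm{u_t}_1$ control'' integration you describe does not close. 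The paper avoids this entirely: it never uses an $L^1\to L^1$ bound, but instead runs the full Moser-type iteration of \cite[Theorem 3.25]{CKS} over $p_k=2^k$ starting from $L^2$, obtains $\norm{P_t^{\psi_\infty}}_{2\to\infty}$, and gets $1\to 2$ by duality from the adjoint $P_t^{-\psi_\infty}$.

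Once you are forced into the iteration, a second problem appears that your proposal does not address and that the paper flags as the main new difficulty. Absorbing $(p\lambda)^2\int f^{2p}\,d\Gamma(\phi,\phi)$ into $\frac{1}{2p}\E(f^p,f^p)$ at level $p$ requires the first \ref{csa}-constant to be $\lesssim (p\lambda)^{-2}$, i.e.\ $\rho\asymp(p\lambda)^{-1}$ in Lemma \ref{l-sip}; a single fixed $\psi=\lambda\phi$ with $\varepsilon\asymp\lambda^{-2}$ cannot serve all levels $p=2^k$ simultaneously. Hence the cutoff functions $\phi_{p,\lambda}$, and therefore the perturbations $\psi_k$, must change with $k$, and one must compare the norms $\norm{P_t^{\psi_k}f}_{p}$ across consecutive levels. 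This is handled in the paper by the new $L^\infty$ estimate \eqref{e-siv}, which yields $\norm{\phi_{p,\lambda}-\phi_{2p,\lambda}}_\infty\le C'/(p\lambda)$ and hence the controllable factors $e^{\pm C/p_k}$ in \eqref{e-sc}, whose product over $k$ converges. So while your identification of where \ref{csa} must enter and of the sharpness needed in the self-improvement is correct, the proof as proposed would fail at the $L^1$ step, and the repair (iterating in $p$) introduces the $p$-dependence of the cutoff functions that constitutes the actual technical content of Proposition \ref{p-drive}.
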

\begin{remark}
 The above properties given by \ref{ahl} and \ref{usg} are a special case of the more general assumptions of volume doubling and heat kernel upper bounds with a general time-space scaling of \cite{AB}.
In fact, Theorem \ref{t-main} is subsumed by \cite[Theorem
1.12]{AB}. A recent work of Lierl provides an alternate proof of the sub-Gaussian estimates in \cite{AB} using Moser's iteration method and extends the results to certain time-dependent, non-symmetric local bilinear forms  \cite{Lie}. Like earlier work by Andres and Barlow and
the present work, Lierl's arguments involve improved control on some cutoff functions.

Our methods give an alternate proof to
\cite[Theorem 1.12]{AB} in a restricted setting. Moreover we show in \cite{MS4} that this techniques can be adapted to the non-local setting to provide new result and
 resolve the conjecture posed in \cite[Remark 1(d)]{MS}.
\end{remark}

\section{Off diagonal estimates using Davies' method}\label{s-Davies}
Spaces satisfying \ref{csa} have a rich class of cutoff functions with low energy. We start by studying energy estimates of these cutoff functions.
\subsection{Self-improving property of $\CSA$}\label{ss-self}
The cutoff Sobolev inequality \ref{csa} has a self-improving property which states that the constants $C_1,C_2$ in \ref{csa} are flexible. For example, we can decrease the value of $C_1$ in \ref{csa}
by increasing $C_2$ appropriately. This is quantified in Lemma \ref{l-sip}. Lemma \ref{l-sip} is essentially contained in \cite{AB}; we simplify the proof and obtain a slightly stronger result. 
\begin{lemma}\label{l-sip} Let $(M,d,\mu)$ satisfy \ref{ahl}. Let $(\E,\F)$ denote a strongly local, regular, Dirichlet form with energy measure $\Gamma$ that satisfies \ref{csa}.
There exists $C_1,C_2>0$ such that
for all $\rho \in (0,1]$,  for all $R,r >0$ and for all $x \in M$,  there exists a  cutoff function $\phi_\rho = \phi_\rho(f)$ for $B(x,R) \subset B(x,R+r)$ that satisfies
 \begin{equation} \label{e-sip}
 \int_U f^2 \, d \Gamma(\phi_\rho,\phi_\rho) \le 4 C_1 \rho^2  \int_U  d\Gamma(f,f) + \frac{C_2 \rho^{2 -\dw} }{r^\dw} \int_U f^2 \, d\mu
\end{equation}
for all $f \in \F$,
 where $C_1,C_2$ are the constants in \ref{csa}. %
 Further the cutoff function $\phi_\rho$ above satisfies
\begin{equation} \label{e-siv}
\abs{ \phi_\rho(y)   - \left( \frac{R+r - d(x,y)}{r} \right) } \le 2 \rho
\end{equation}
for all $y \in B(x,R+r)\setminus B(x,R)$.
\end{lemma}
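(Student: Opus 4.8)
\noindent\emph{Strategy and construction.}\ The plan is to build $\phi_\rho$ by \emph{chaining} cutoff functions supplied by $\CSA(\dw)$. Put $N:=\floor{1/\rho}$ and $s_i:=R+ir/N$ for $0\le i\le N$, so that $s_0=R$, $s_N=R+r$, and $\overline{B(x,s_{i-1})}\subseteq\{d(x,\cdot)\le s_{i-1}\}\subseteq B(x,s_i)$; for each $i\in\{1,\dots,N\}$, apply $\CSA(\dw)$ to the pair $B(x,s_{i-1})\subset B(x,s_i)$ (that is, with $x$, $s_{i-1}$, $r/N$ in the roles of $x$, $R$, $r$) to obtain a cutoff function $\phi_i$ for $B(x,s_{i-1})\subset B(x,s_i)$ satisfying, with $U_i:=B(x,s_i)\setminus\overline{B(x,s_{i-1})}$,
\[
\int_{U_i}f^2\,d\Gamma(\phi_i,\phi_i)\le C_1\int_{U_i}\phi_i^2\,d\Gamma(f,f)+\frac{C_2N^{\dw}}{r^{\dw}}\int_{U_i}f^2\,d\mu\qquad(f\in\F).
\]
I then set $\phi_\rho:=\frac1N\sum_{i=1}^N\phi_i$. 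This is continuous, and since every $\phi_i$ equals $1$ on $B(x,R)\subseteq B(x,s_{i-1})$ and $0$ on $B(x,R+r)^c\subseteq B(x,s_i)^c$, it is a cutoff function for $B(x,R)\subset B(x,R+r)$. It does not depend on $f$, which is stronger than claimed (the dependence $\phi_\rho=\phi_\rho(f)$ in the statement is present only for compatibility with \cite{AB}).

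\medskip
\noindent\emph{The profile estimate \eqref{e-siv}.}\ For $y$ with $R\le d(x,y)<R+r$, let $k\in\{1,\dots,N\}$ be the index with $s_{k-1}\le d(x,y)<s_k$. For $i>k$ one has $d(x,y)<s_{i-1}$, so $\phi_i(y)=1$; for $i<k$ one has $d(x,y)\ge s_i$, so $\phi_i(y)=0$; and $\phi_k(y)\in[0,1]$. Hence $\phi_\rho(y)=\frac{(N-k)+\phi_k(y)}{N}$, and an elementary computation shows that $\phi_\rho(y)$ and $\frac{R+r-d(x,y)}{r}$ both lie in the interval $\big[\frac{N-k}{N},\frac{N-k+1}{N}\big]$, of length $1/N$. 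Since $\floor{1/\rho}\ge\frac1{2\rho}$ for $\rho\in(0,1]$, this yields $\abs{\phi_\rho(y)-\frac{R+r-d(x,y)}{r}}\le\frac1N\le 2\rho$.

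\medskip
\noindent\emph{The energy estimate \eqref{e-sip}.}\ The crucial point I would exploit is that chaining makes \emph{all} off-diagonal energies vanish. Let $i<j$. Since $\phi_i\equiv 0$ on $\{d(x,\cdot)\ge s_i\}$, locality (property (i)) gives $\mathbf{1}_{\{d(x,\cdot)\ge s_i\}}\,d\Gamma(\phi_i,\phi_j)=0$; since $\phi_j\equiv 1$ on $\{d(x,\cdot)<s_{j-1}\}\supseteq\{d(x,\cdot)<s_i\}$, it gives $\mathbf{1}_{\{d(x,\cdot)<s_i\}}\,d\Gamma(\phi_i,\phi_j)=0$ as well; as these two sets cover $M$, $\Gamma(\phi_i,\phi_j)=0$. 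Therefore $\Gamma(\phi_\rho,\phi_\rho)=\frac1{N^2}\sum_{i=1}^N\Gamma(\phi_i,\phi_i)$, and locality also shows each $\Gamma(\phi_i,\phi_i)$ is concentrated on $U_i$; the $U_i$ are pairwise disjoint and contained in $U$. Summing the displayed inequalities over $i$, bounding $\phi_i^2\le 1$, using disjointness, and dividing by $N^2$,
\[
\int_U f^2\,d\Gamma(\phi_\rho,\phi_\rho)\le\frac{C_1}{N^2}\int_U d\Gamma(f,f)+\frac{C_2N^{\dw-2}}{r^{\dw}}\int_U f^2\,d\mu.
\]
Finally $\frac1{2\rho}\le N\le\frac1\rho$ gives $\frac{C_1}{N^2}\le 4C_1\rho^2$, and $\dw\ge 2$ gives $C_2N^{\dw-2}\le C_2\rho^{2-\dw}$; this is exactly \eqref{e-sip}.

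\medskip
\noindent\emph{Where the difficulty lies.}\ The real content of the argument is the vanishing of the cross terms $\Gamma(\phi_i,\phi_j)$: if one only had, say, a factor $3$ from near-diagonal terms, then $N$ would have to be chosen too small to reach the sharp constant $4C_1\rho^2$, so this short locality computation is the step I expect to have to present most carefully. The one remaining technicality is making sure $\Gamma(\phi_i,\phi_i)$ is genuinely captured by the $\CSA(\dw)$ integral over $U_i$, i.e.\ that the separating spheres $\{d(x,\cdot)=s_i\}$ cause no trouble; this is automatic when the inner ball in the region of $\CSA(\dw)$ is taken open, as in \cite{AB}, and in general can be arranged by perturbing the radii $s_1,\dots,s_{N-1}$ slightly, since the finitely many finite measures in play charge only countably many of the spheres $\{d(x,\cdot)=s\}$, the resulting loss of equal spacing being absorbed into the constant $4$. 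Everything else is routine bookkeeping.
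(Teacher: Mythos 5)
Your proof is correct and follows essentially the same route as the paper's: partition the annulus into $\lfloor 1/\rho\rfloor$ equal sub-annuli, apply $\CSA(\dw)$ on each, average the resulting cutoff functions $\phi_i$, and use locality to reduce $\Gamma(\phi_\rho,\phi_\rho)$ to the diagonal sum $N^{-2}\sum_i\Gamma(\phi_i,\phi_i)$ before summing the $\CSA$ inequalities. Your explicit verification that the cross terms $\Gamma(\phi_i,\phi_j)$ vanish, and your remark about the separating spheres, merely spell out what the paper dispatches with a one-line appeal to locality; the construction, the profile estimate, and all constants coincide.
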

\begin{remark}
Lemma \ref{l-sip} is essentially contained in work of Andres and Barlow \cite[Lemma 5.1]{AB}. More recently, following \cite[Lemma 5.1]{AB}, J. Lierl obtained a cutoff Sobolev inequality \cite[Lemma 2.3]{Lie} that is similar to Lemma \ref{l-sip}. However, the estimate \eqref{e-siv} is new and it shows that the cutoff functions converge in $L^\infty$ norm as $\rho \to 0$ to the ``linear cutoff function''. The constructions in \cite[Lemma 5.1]{AB} and \cite[Lemma 2.3]{Lie} converge in $L^\infty$ norm as $\rho \to 0$ to a somewhat more complicated cutoff function that depends on $\dw$. The proof below was suggested to us by Martin Barlow.
\end{remark}
\begin{proof}[Proof of Lemma \ref{l-sip}]
Let $x \in M$, $r>0$, $R>0$, $\rho>0$, $f \in \F$. Define $n:= \lfloor \rho^{-1} \rfloor \in  [\rho^{-1}/2 ,\rho^{-1}]$.  We divide the annulus $U= B(x,R+r) \setminus B(x,R)$ into $n$-annuli $U_1,U_2,\ldots,U_n$ of equal width, where
\[
U_i:= B(x,R+  ir/n) \setminus  B(x,R+  (i-1)r/n), \hspace{5mm} i=1,2,\ldots,n.
\]
By \ref{csa}, there exists a cutoff function $\phi_i$ for $B(x,R+(i-1)r/) \subset B(x,R+ir/n)$ satisfying
\begin{equation}\label{e-sip1}
\int_{U_i} f^2 \, d\Gamma(\phi_i,\phi_i) \le C_1 \int_{U_i} \, d\Gamma(f,f) + \frac{C_2}{ (r/n)^\dw} \int_{U_i} f^2 \, d\mu
\end{equation}
for $i=1,2,\ldots,n$.  We define $\phi= n^{-1} \sum_{i=1}^n \phi_i$. By locality, we have
\begin{equation}
\label{e-sip2} d\Gamma(\phi,\phi) = \frac{1}{n^2}\sum_{i=1}^n d\Gamma(\phi_i,\phi_i).
\end{equation}
Therefore by \eqref{e-sip2},  \eqref{e-sip1} and $\rho^{-1}/2 \le  n= \lfloor \rho^{-1} \rfloor \le \rho^{-1}$ , we obtain
\begin{align}
\int_U f^2 \,d\Gamma(\phi,\phi) &= n^{-2} \sum_{i=1}^n   \int_U f^2 \,d\Gamma(\phi,\phi) \nonumber \\
& \le n^{-2} \sum_{i=1}^n \left(  C_1 \int_{U_i}  d\Gamma(f,f) + \frac{C_2}{ (r/n)^\dw} \int_{U_i} f^2 \, d\mu \right) \nonumber \\
& \le  C_1 n^{-2} \int_{U}   d\Gamma(f,f) + \frac{C_2 n^{\dw-2} }{  r^\dw } \int_{U_i} f^2 \, d\mu \nonumber \\
& \le  4 C_1 \rho^2 \int_{U}   d\Gamma(f,f) + \frac{C_2 \rho^{2-\dw} }{  r^\dw } \int_{U_i} f^2 \, d\mu. \nonumber
\end{align}
This completes the proof of \eqref{e-sip}.

Note that if $y \in U_i$, then $ 1 - i/n \le \phi(y) \le 1 - (i-1)/n$ and $R +  (i-1)r/n \le d(x,y) < R +  ir/n$, for each $1 \le i \le n$. This along with $n^{-1} \le 2 \rho$ implies \eqref{e-siv}.

\end{proof}

Observe that by \eqref{e-siv}, the cutoff function $\phi_\rho$ for $B(x,r)\subset B(x,R+r)$ satisfies
\[
 \lim_{\rho \downarrow 0} \phi_\rho(y) =  1 \wedge \left(\frac{(R+r-d(x,y))_+}{r} \right).
\]
\subsection{Estimates on perturbed forms}\label{ss-perturbed}
The key to carry out Davies' method is the following elementary inequality.
\begin{lemma} \label{l-key}
 Let $(\E,\F)$ be a strongly local, regular, Dirichlet form. Then
 \begin{equation} \label{e-key}
  \E(e^\psi f^{2p-1} , e^{-\psi} f) \ge \frac{1}{p} \E(f^p,f^p) - p \int_{M} f^{2p} \, d\Gamma (\psi,\psi)
 \end{equation}
 for all $f \in \F$, $\psi \in \F$ and $p \in [1,\infty)$.
\end{lemma}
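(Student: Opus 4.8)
The plan is to verify \eqref{e-key} by a direct computation with the energy measure, followed by a single application of the Cauchy--Schwarz inequality for energy measures. By a routine truncation/approximation argument one may assume that $f$ and $\psi$ are bounded and $f\ge 0$, so that $e^{\pm\psi}$, $f^{p}$, $f^{2p-1}$, $e^{\psi}f^{2p-1}$ and $e^{-\psi}f$ all lie in $\F\cap L^{\infty}$ and the Leibniz and chain rules \eqref{e-lei}, \eqref{e-cha} may be used freely. First I would expand $d\Gamma(e^{\psi}f^{2p-1},e^{-\psi}f)$ by applying the Leibniz rule in each argument and then the chain rule in the forms $d\Gamma(f^{k},\cdot)=k f^{k-1}\,d\Gamma(f,\cdot)$ and $d\Gamma(e^{\pm\psi},\cdot)=\pm e^{\pm\psi}\,d\Gamma(\psi,\cdot)$. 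Every factor $e^{\psi}$ meets a factor $e^{-\psi}$ and cancels, and the two resulting $d\Gamma(f,\psi)$-terms combine with coefficient $-(2p-1)+1=-2(p-1)$, giving
\[
 d\Gamma(e^{\psi}f^{2p-1},e^{-\psi}f)=(2p-1)\,f^{2p-2}\,d\Gamma(f,f)-2(p-1)\,f^{2p-1}\,d\Gamma(f,\psi)-f^{2p}\,d\Gamma(\psi,\psi).
\]

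Integrating over $M$ yields a formula for $\E(e^{\psi}f^{2p-1},e^{-\psi}f)$. Since the chain rule also gives $d\Gamma(f^{p},f^{p})=p^{2}f^{2p-2}\,d\Gamma(f,f)$, we have $\tfrac1p\E(f^{p},f^{p})=p\int_{M}f^{2p-2}\,d\Gamma(f,f)$. Subtracting this and adding $p\int_{M}f^{2p}\,d\Gamma(\psi,\psi)$ to the formula above, the coefficient of $\int f^{2p-2}\,d\Gamma(f,f)$ becomes $(2p-1)-p=p-1$ and that of $\int f^{2p}\,d\Gamma(\psi,\psi)$ becomes $-1+p=p-1$, so
\[
 \E(e^{\psi}f^{2p-1},e^{-\psi}f)-\tfrac1p\E(f^{p},f^{p})+p\int_{M}f^{2p}\,d\Gamma(\psi,\psi)=(p-1)\int_{M}\bigl(f^{2p-2}\,d\Gamma(f,f)-2f^{2p-1}\,d\Gamma(f,\psi)+f^{2p}\,d\Gamma(\psi,\psi)\bigr).
\]
Thus \eqref{e-key} is equivalent to nonnegativity of the right-hand side, and since $p\ge 1$ it is enough to show that the integral there is nonnegative (for $p=1$ everything vanishes and \eqref{e-key} is an identity).

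For that last step I would invoke the Cauchy--Schwarz inequality for energy measures. Let $\lambda=\Gamma(f,f)+\Gamma(\psi,\psi)$ and write $\gamma_{ff},\gamma_{\psi\psi},\gamma_{f\psi}$ for the $\lambda$-densities of $\Gamma(f,f),\Gamma(\psi,\psi),\Gamma(f,\psi)$. Since $\Gamma(f+t\psi,f+t\psi)=\Gamma(f,f)+2t\,\Gamma(f,\psi)+t^{2}\,\Gamma(\psi,\psi)\ge 0$ as a measure, its density $\gamma_{ff}+2t\gamma_{f\psi}+t^{2}\gamma_{\psi\psi}$ is $\ge 0$ $\lambda$-a.e.\ for each $t$ in a countable dense subset of $\R$, hence for all $t\in\R$, which forces $\gamma_{f\psi}^{2}\le\gamma_{ff}\gamma_{\psi\psi}$ $\lambda$-a.e. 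Taking the bounded Borel functions $a=f^{p-1}$ and $b=f^{p}$, we get $\lambda$-a.e.
\[
 a^{2}\gamma_{ff}-2ab\,\gamma_{f\psi}+b^{2}\gamma_{\psi\psi}\ge a^{2}\gamma_{ff}-2|a||b|\sqrt{\gamma_{ff}\gamma_{\psi\psi}}+b^{2}\gamma_{\psi\psi}=\bigl(|a|\sqrt{\gamma_{ff}}-|b|\sqrt{\gamma_{\psi\psi}}\bigr)^{2}\ge 0,
\]
and integrating against $\lambda$ gives exactly $\int_{M}\bigl(f^{2p-2}\,d\Gamma(f,f)-2f^{2p-1}\,d\Gamma(f,\psi)+f^{2p}\,d\Gamma(\psi,\psi)\bigr)\ge 0$, which finishes the proof. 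The only real obstacle I foresee is bookkeeping: keeping the $e^{\pm\psi}$ factors and the two $d\Gamma(f,\psi)$ cross terms straight in the expansion, together with the (harmless) verification that the truncation/approximation reduces $f,\psi\in\F$ to the bounded case so that the energy-measure calculus is legitimate; the Cauchy--Schwarz property of energy measures used above, though not among the properties listed, is classical (cf.\ \cite{FOT}).
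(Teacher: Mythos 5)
Your proof is correct and follows essentially the same route as the paper: expand $d\Gamma(e^{\psi}f^{2p-1},e^{-\psi}f)$ via the Leibniz and chain rules to isolate the quadratic expression $(p-1)\int\bigl(f^{2p-2}\,d\Gamma(f,f)-2f^{2p-1}\,d\Gamma(f,\psi)+f^{2p}\,d\Gamma(\psi,\psi)\bigr)$ and show it is nonnegative by Cauchy--Schwarz for energy measures (which the paper simply cites from \cite[Theorem 3.7]{CKS} rather than deriving from the density argument as you do). Incidentally, your sign is the right one: the identity \eqref{e-ky1} as printed has a typo ($-p f^{2p}\Gamma(\psi,\psi)$ should be $+p f^{2p}\Gamma(\psi,\psi)$ for the coefficient $(p-1)$ on the right to come out), and your version with the ``$+$'' is what the algebra actually gives.
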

\begin{proof}
 Using Leibniz rule \eqref{e-lei} and chain rule \eqref{e-cha}, we obtain
 \begin{eqnarray} \label{e-ky1}
\lefteqn{\Gamma(e^\psi f^{2p-1} , e^{-\psi} f) - \frac{1}{p} \Gamma(f^p,f^p) - p  f^{2p} \Gamma (\psi,\psi)} \nonumber \\
&=& (p-1) \left( f^{2(p-1)} \Gamma(f,f) + f^{2p} \Gamma(\psi,\psi) - 2 f^{2p-1}\Gamma(f,\psi) \right).
 \end{eqnarray}
 By \cite[Theorem 3.7]{CKS} and the Cauchy-Schwarz inequality,  we have
 \begin{equation*}
   \int_M f^{2p-1}\, d\Gamma(f,\psi) \le \left( \int_M f^{2(p-1)} \, d  \Gamma(f,f) \cdot \int_M  f^{2p} \, d\Gamma(\psi,\psi) \right)^{1/2}.
 \end{equation*}
Therefore
 \begin{equation} \label{e-ky2}
  2 \int_M f^{2p-1}\, d\Gamma(f,\psi) \le  \int_M f^{2(p-1)} \, d  \Gamma(f,f) +  \int_M  f^{2p} \, d\Gamma(\psi,\psi).
 \end{equation}
 By integrating \eqref{e-ky1} and using \eqref{e-ky2}, we obtain \eqref{e-key}.
\end{proof}
Davies used the bound
\[
\int_{M} f^{2p} \, d\Gamma (\psi,\psi) \le \norm{\frac{d \Gamma(\psi,\psi)}{d \mu}}_\infty \norm{f}_{2p}^{2p}
\]
to control a term in \eqref{e-key}. However for anomalous diffusions, the energy measure is singular to $\mu$.
We will instead use \ref{csa} to bound $\int_{M} f^{2p} \, d\Gamma (\psi,\psi) $ by choosing $\psi$ to be a multiple of the cutoff function satisfying \ref{csa}.
The following estimate is analogous to \cite[Theorem 3.9]{CKS} but unlike in \cite{CKS},
the cutoff functions depend on both $p$ and $\lambda$. This raises new difficulties in the implementation of Davies' method.
\begin{prop} \label{p-drive}
Let  $(M,d,\mu)$ be a metric measure space. Let $(\E,\F)$ be a strongly local, regular, Dirichlet form on $M$ satisfying \ref{csa}. There exists $C>0$ such that, for all $\lambda \ge 1$, for all $r>0$, for all $x \in M$, and
for all $p \in [1,\infty)$, there exists a cutoff function $\phi=\phi_{p,\lambda}$ on $B(x,r) \subset B(x,2r)$  such that
\begin{equation} \label{e-dri3}
  \E(e^{\lambda \phi} f^{2p-1}, e^{-\lambda \phi} f) \ge  \frac{1}{2p} \E(f^p,f^p)  - C \frac{\lambda ^\dw p^{\dw-1}}{r^\dw} \norm{f}_{2p}^{2p},
\end{equation}
for all $f \in \F$.
 There exists $C'>0$ such that the cutoff functions $\phi_{p,\lambda}$ above satisfy
\begin{equation} \label{e-step}
\norm{\exp \left( \lambda (\phi_{p,\lambda} - \phi_{2p,\lambda})  \right)}_\infty \vee \norm{\exp \left( -\lambda (\phi_{p,\lambda} - \phi_{2p,\lambda})  \right)}_\infty \le \exp(C'/p)
\end{equation}
for all $\lambda \ge 1$ and for all $p \ge 1$.
\end{prop}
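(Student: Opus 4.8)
The plan is to combine Lemma~\ref{l-key} with the self-improving cutoff Sobolev inequality of Lemma~\ref{l-sip}, choosing the scale parameter $\rho$ in Lemma~\ref{l-sip} carefully as a function of $p$ and $\lambda$. Concretely, set $\psi = \lambda \phi$ where $\phi = \phi_\rho$ is the cutoff function for $B(x,r) \subset B(x,2r)$ produced by Lemma~\ref{l-sip} (so with ``$R$'' $= r$ and annulus width ``$r$''), and apply \eqref{e-key} with this $\psi$. The error term in \eqref{e-key} is $p \int_M f^{2p}\,d\Gamma(\psi,\psi) = p\lambda^2 \int_M f^{2p}\,d\Gamma(\phi,\phi)$. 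Since $\phi \equiv 1$ on $B(x,r)$ and $\phi \equiv 0$ outside $B(x,2r)$, locality of $\Gamma$ gives $\int_M f^{2p}\,d\Gamma(\phi,\phi) = \int_U f^{2p}\,d\Gamma(\phi,\phi)$ with $U$ the relevant annulus, and now \eqref{e-sip} applies with $f$ there replaced by $f^p$:
\[
  p\lambda^2 \int_U f^{2p}\,d\Gamma(\phi,\phi) \le 4 C_1 \, p\lambda^2 \rho^2 \int_U d\Gamma(f^p,f^p) + \frac{C_2\, p\lambda^2 \rho^{2-\dw}}{r^\dw}\,\norm{f^p}_2^2.
\]
The first term on the right is $4C_1 p \lambda^2 \rho^2\, \E(f^p,f^p)$ (using $\int_U d\Gamma(f^p,f^p) \le \E(f^p,f^p)$), which we want to absorb: we need $p \cdot 4 C_1 p \lambda^2 \rho^2 \le \tfrac{1}{2p}$, i.e.\ $\rho^2 \lesssim p^{-3}\lambda^{-2}$. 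So the natural choice is $\rho = c\, p^{-3/2}\lambda^{-1}$ for a small constant $c$ depending only on $C_1$; since $\lambda \ge 1$ and $p \ge 1$ this lies in $(0,1]$, as required by Lemma~\ref{l-sip}. With this choice the second term becomes $C_2 c^{2-\dw} p \lambda^2 (p^{3/2}\lambda)^{\dw-2} r^{-\dw}\norm{f}_{2p}^{2p} = C\, \lambda^\dw p^{(3\dw-4)/2} r^{-\dw}\norm{f}_{2p}^{2p}$.

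This \emph{almost} gives \eqref{e-dri3}, except the exponent of $p$ I obtain, $(3\dw-4)/2$, is larger than the claimed $\dw-1$ when $\dw > 2$. To recover the sharper power $p^{\dw-1}$ one should not split the annulus into $n \approx \rho^{-1}$ \emph{equal} pieces but instead optimize the subdivision: going back into the proof of Lemma~\ref{l-sip} (or redoing its telescoping argument directly here), divide $U$ into $n$ annuli and, after applying \ref{csa} on each and averaging $\phi = n^{-1}\sum \phi_i$, one gets an error $\lesssim p\lambda^2\big(C_1 n^{-2}\E(f^p,f^p) + C_2 n^{\dw-2} r^{-\dw}\norm{f}_{2p}^{2p}\big)$; absorbing the first term needs $n^{-2} \lesssim p^{-2}\lambda^{-2}$, i.e.\ $n \asymp p\lambda$, and then the second term is $\lesssim \lambda^\dw p^{\dw-1} r^{-\dw}\norm{f}_{2p}^{2p}$, exactly as claimed. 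So the cutoff function $\phi_{p,\lambda}$ is $n^{-1}\sum_{i=1}^n \phi_i$ with $n = \lceil p\lambda \rceil$ (or the $\phi_\rho$ of Lemma~\ref{l-sip} with $\rho \asymp (p\lambda)^{-1}$), and the factor $\tfrac12$ rather than generic constant in front of $\E(f^p,f^p)$ is arranged by the self-improving constant $c$. The main subtlety here is bookkeeping the powers of $p$ correctly — it is tempting to just quote \eqref{e-sip} as a black box with $\rho = (p\lambda)^{-1}$, but then the absorption condition $p \cdot 4C_1 p \lambda^2 \rho^2 \le \tfrac{1}{2p}$ fails; one genuinely needs the $n^{-2}$ versus $n^{\dw-2}$ form of the telescoped estimate (equivalently, $\rho^2$ versus $\rho^{2-\dw}$ in \eqref{e-sip} but with the constraint coming only from the $\rho^2$ term), and to track that the $p$ multiplying the whole error from \eqref{e-key} combines with $n \asymp p\lambda$ to produce $p \cdot (p\lambda)^{\dw-2} = p^{\dw-1}\lambda^{\dw-2}$, times the leftover $\lambda^2$ from $\psi = \lambda\phi$, giving $\lambda^\dw p^{\dw-1}$.

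For the second claim \eqref{e-step}: with the explicit description $\phi_{p,\lambda} = n_p^{-1}\sum_{i=1}^{n_p}\phi_i^{(p)}$ where $n_p = \lceil p\lambda\rceil$, the estimate \eqref{e-siv} from Lemma~\ref{l-sip} gives $\abs{\phi_{p,\lambda}(y) - L(y)} \le 2\rho_p \asymp (p\lambda)^{-1}$ uniformly in $y$, where $L(y) = 1 \wedge \big((2r - d(x,y))_+/r\big)$ is the fixed linear cutoff independent of $p$ and $\lambda$. Hence $\abs{\phi_{p,\lambda}(y) - \phi_{2p,\lambda}(y)} \le \abs{\phi_{p,\lambda}(y) - L(y)} + \abs{L(y) - \phi_{2p,\lambda}(y)} \le 2\rho_p + 2\rho_{2p} \le 6\rho_p \asymp (p\lambda)^{-1} \le c'/(p\lambda)$. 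Multiplying by $\lambda$: $\lambda\abs{\phi_{p,\lambda} - \phi_{2p,\lambda}} \le c'/p$, which immediately yields $\norm{\exp(\pm\lambda(\phi_{p,\lambda}-\phi_{2p,\lambda}))}_\infty \le \exp(c'/p) =: \exp(C'/p)$, uniformly in $\lambda \ge 1$ and $p \ge 1$. This is the payoff of having proved the $L^\infty$-convergence estimate \eqref{e-siv} in Lemma~\ref{l-sip}: both $\phi_{p,\lambda}$ and $\phi_{2p,\lambda}$ are uniformly close to the \emph{same} $p$-independent profile, so their difference is controlled even after multiplication by $\lambda$, which is precisely what the Davies iteration in the next section will need when it passes from exponent $p$ to exponent $2p$.
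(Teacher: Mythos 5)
Your final argument is correct and is essentially the paper's proof: apply Lemma \ref{l-key} with $\psi=\lambda\phi_\rho$, invoke Lemma \ref{l-sip} (with $f$ replaced by $f^p$ and using locality to restrict to the annulus) with $\rho\asymp(p\lambda)^{-1}$ --- the paper takes $\rho^2=(p\lambda)^{-2}/(8C_1)$ --- and derive \eqref{e-step} from \eqref{e-siv} via the triangle inequality through the $p$-independent linear profile, exactly as you do.

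One correction to your narrative, though: the ``failure'' of the black-box application of \eqref{e-sip} that sends you back into the telescoping argument is spurious. In your first-pass absorption condition you wrote $p\cdot 4C_1 p\lambda^2\rho^2\le\tfrac{1}{2p}$, but the factor $p$ coming from the error term $p\int_M f^{2p}\,d\Gamma(\psi,\psi)$ in \eqref{e-key} is already included in the coefficient $4C_1 p\lambda^2\rho^2$ of your displayed inequality; the correct condition is $4C_1 p\lambda^2\rho^2\le\tfrac{1}{2p}$, i.e.\ $\rho\asymp(p\lambda)^{-1}$. This is precisely what your ``re-optimized subdivision'' with $n\asymp p\lambda$ amounts to: since $n=\lfloor\rho^{-1}\rfloor$, the dichotomy $n^{-2}$ versus $n^{\dw-2}$ is identical to $\rho^2$ versus $\rho^{2-\dw}$ in \eqref{e-sip}. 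So quoting \eqref{e-sip} as a black box with $\rho\asymp(p\lambda)^{-1}$ does work, giving the error coefficient $p\lambda^2\rho^{2-\dw}r^{-\dw}\asymp p^{\dw-1}\lambda^{\dw}r^{-\dw}$ as required, and no sharper form of Lemma \ref{l-sip} is needed.
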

\begin{proof}
 This theorem follows from Lemma \ref{l-key} and Lemma \ref{l-sip}. Let $x \in M$ and $r>0$ be arbitrary.
 Using \eqref{e-key}, we obtain
 \begin{equation} \label{e-ts1}
    \E(e^{\lambda \phi} f^{2p-1}, e^{-\lambda \phi} f) \ge \frac{1}{p} \left( \E(f^p,f^p) - (p \lambda)^2 \int_M f^{2p} \, d\Gamma(\phi,\phi) \right).
 \end{equation}
By Lemma \ref{l-sip} and fixing $\rho^2= (p\lambda)^{-2}/(8C_1)$ in \eqref{e-sip}, we obtain a cutoff function $\phi= \phi_{p,\lambda}$ for $B(x,r)\subset B(x,2r)$ and $C > 0$ such that
\begin{equation}\label{e-ts2}
 (p \lambda)^2 \int_M f^{2p} \, d\Gamma(\phi,\phi) \le \frac{1}{2} \E(f^p,f^p) + C \frac{ (\lambda p)^\dw}{r^\dw} \int_M f^{2p}\,d\mu.
\end{equation}
By \eqref{e-ts1} and \eqref{e-ts2}, we obtain \eqref{e-dri3}.

By \eqref{e-siv} and the above calculations, there exists $C'>0$ such that the cutoff functions $\phi_{p,\lambda}=\phi_{p,\lambda}(f)$ satisfy
\[
 \norm{ \phi_{p,\lambda} - \phi_{2p,\lambda} }_\infty \le \frac{C'}{p \lambda}
\]
for all $p \ge 1$, for all $\lambda \ge 1$, for all $f \in \F$, for all $x \in M$ and for all $r>0$.
This immediately  implies \eqref{e-step}.

\end{proof}
\begin{remark}
 Estimates similar to \eqref{e-dri3}, were introduced by Davies in \cite[equation (3)]{Dav2} to obtain off-diagonal estimates for  higher order  (order greater than 2) elliptic operators.
 Roughly speaking, the generator $\L$ for anomalous diffusion with walk dimension $\dw$ behaves like an `elliptic operator of order $\dw$'.
 However the theory presented in \cite{Dav2} is complete only when the `order' $\dw$ is bigger than the volume growth exponent $\df$, \emph{i.e.} in the strongly recurrent case.
 This is because the method in \cite{Dav2} relies on a Gagliardo-Nirenberg inequality which is true only in the strongly recurrent setting. We believe that one can adapt the methods of \cite{Dav2}
 to obtain an easier proof for the strongly recurrent case. However, we will not impose any such restrictions and our proof will closely follow the one in \cite{CKS}.
\end{remark}

\subsection{ Proof of Theorem \ref{t-main}:}
Let $\lambda \ge 1$ and $x \in M$ and $r>0$. Let $p_k=2^k$ and let
$\psi_k = \lambda \phi_{p_k,\lambda}$, where $ \phi_{p_k,\lambda}$
is a cutoff function on $B(x,r) \subset B(x,2r)$  given by
Proposition \ref{p-drive}. We write
\begin{equation} \label{e-fdef}
 f_{t,k}:= P_t^{\psi_{k} } f
\end{equation}
for all $k \in \N$, where $f \in \F$ and $P_t^{\psi_{k} }$ denotes the perturbed semigroup as in \eqref{e-pet}.

Using \eqref{e-dri3}, there exists $C_0>0$ such that
\begin{eqnarray}
\nonumber \frac{d}{dt} \norm{f_{t,0}}_{2}^2 &=& -2 \E\left(e^{\psi_1} f_{t,0}, e^{- \psi_1 } f_{t,0}\right) \\
&\le& 2 C_0 \frac{ \lambda^\dw }{r^\dw} \norm{f_{t,0}}_2^2  \label{e-sr1}
\end{eqnarray}
and
\begin{eqnarray}
\nonumber \frac{d}{dt} \norm{f_{t,k}}_{2p_{k}}^{2p_k} &=& -2 p_k \E\left(e^{\psi_k}  f_{t,k}^{2p_k-1}, e^{-\psi_k }  f_{t,k} \right) \\
&\le& - \E \left( f_{t,k}^{p_k}, f_{t,k}^{p_k}\right) + 2 C_0 \left(\frac{\lambda p_k}{r} \right)^\dw \norm{f_{t,k}}_{2 p_k}^{2 p_k} \label{e-sr2}
\end{eqnarray}
for all $k \in \N^*$.
By \eqref{e-sr1}, we obtain
\begin{equation}
 \label{e-sr3} \norm{f_{t,0}}_{p_1} =\norm{f_{t,0}}_2 \le \exp\left(  C_0 \lambda^\dw t /r^\dw \right) \norm{f}_2.
\end{equation}
Using \eqref{e-sr2} and the Nash inequality \ref{nash}, we obtain
\begin{equation}
\label{e-sr4} \frac{d}{dt} \norm{f_{t,k}}_{2p_{k}} \le - \frac{1}{2 C_N p_k} \norm{f_{t,k}}_{2 p_k}^{1+ 2 \dw p_k /\df }\norm{f_{t,k}}_{p_k}^{-2 \dw p_k/\df} + C_0 p_k^{\dw-1}\left( \frac{\lambda}{r}\right)^\dw\norm{f_{t,k}}_{2p_k}
\end{equation}
for all $k \in \N^*$. By \eqref{e-step} and the fact that $P_t$ is a contraction on $L^\infty$, we have
\begin{equation} \label{e-sc}
  \exp(-2 C_1/p_k) f_{t,k+1}  \le f_{t,k} \le \exp(2 C_1/p_k) f_{t,k+1}
\end{equation}
for all $k  \in \N_{\ge 0}$.
Combining \eqref{e-sr4} and \eqref{e-sc}, we obtain
\begin{equation}
 \label{e-sr5}  \frac{d}{dt} \norm{f_{t,k}}_{2p_{k}} \le - \frac{1}{C_A p_k} \norm{f_{t,k}}_{2 p_k}^{1+ 2 \dw p_k /\df }\norm{f_{t,k-1}}_{p_k}^{-2 \dw p_k/\df} + C_0 p_k^{\dw-1}\left( \frac{\lambda}{r}\right)^\dw\norm{f_{t,k}}_{2p_k}
\end{equation}
for all $k \in \N^*$, where $C_A= 2 C_N \exp ( 8\dw C_1 /\df)$.

To obtain off-diagonal estimates using the differential inequalities \eqref{e-sr5} we use the following lemma.
The following lemma is analogous to \cite[Lemma 3.21]{CKS} but the statement and its proof is slightly modified to suit our anomalous diffusion context with walk dimension $\dw$.

\begin{lemma}\label{l-dife}
Let $w:[0,\infty) \to (0,\infty)$ be a  non-decreasing function and suppose that $u \in C^1([0,\infty); (0,\infty))$ satisfies
\begin{equation} \label{e-cks}
 u'(t) \le - \frac{\epsilon}{p} \left( \frac{t^{(p-2)/\theta p}}{w(t)}\right)^{\theta p } u^{1+\theta p}(t) + \delta p^{\dw - 1}u(t)
\end{equation}
for some positive $\epsilon, \theta$ and $\delta$, $\dw \in [2,\infty)$ and $p=2^k$ for some $k \in \N^*$. Then  $u$ satisfies
\begin{equation} \label{e-fs} 
 u(t) \le  \left( \frac{2 p^{\dw} }{\epsilon \theta} \right)^{1/\theta p} t^{(1-p)/\theta p} w(t)  e^{\delta t/p}
\end{equation}
\end{lemma}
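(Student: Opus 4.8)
The plan is to linearize the Bernoulli-type inequality \eqref{e-cks} by the substitution $g:=u^{-\theta p}$ and then read off an upper bound for $u$ from a one-line integrating-factor computation; the only delicate point is the interval over which one integrates. Throughout, write $A(t):=\tfrac{\epsilon}{p}\,t^{p-2}\,w(t)^{-\theta p}\ (\ge 0)$, so that $A(t)=\tfrac{\epsilon}{p}\big(t^{(p-2)/\theta p}/w(t)\big)^{\theta p}$, and $B:=\delta\,p^{\dw-1}\ (\ge 0)$; then \eqref{e-cks} reads $u'\le -A(t)\,u^{1+\theta p}+B\,u$. Since $u>0$ and $g'=-\theta p\,u^{-\theta p-1}u'$, multiplying $-u'\ge A(t)u^{1+\theta p}-B u$ by the positive factor $\theta p\,u^{-\theta p-1}$ gives the linear differential inequality
\[
 g'(t)+\theta p B\,g(t)\ \ge\ \theta p\,A(t),\qquad t>0.
\]

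I would then multiply through by the integrating factor $e^{\theta p B t}$, obtaining $\tfrac{d}{dt}\big(e^{\theta p B t}g(t)\big)\ge\theta p\,e^{\theta p B t}A(t)$, and integrate from $t':=t\,(1-p^{-\dw})$ up to $t$. Discarding the nonnegative boundary term $e^{\theta p B t'}g(t')$ yields
\[
 g(t)\ \ge\ \theta p\int_{t'}^{t}e^{-\theta p B(t-s)}A(s)\,ds\ \ge\ \theta p\,e^{-\theta p B(t-t')}\int_{t'}^{t}A(s)\,ds.
\]
The reason for this particular lower endpoint is that $\theta p B(t-t')=\theta p\cdot\delta p^{\dw-1}\cdot t\,p^{-\dw}=\theta\delta t$, so the exponential loss is exactly $e^{-\theta\delta t}$; after taking $\theta p$-th roots this becomes the factor $e^{\delta t/p}$ appearing in \eqref{e-fs}, whereas integrating all the way from $0$ would only produce the much weaker $e^{\delta p^{\dw-1}t}$.

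It then remains to bound $\int_{t'}^{t}A(s)\,ds$ from below. Since $w$ is non-decreasing, $w(s)\le w(t)$ for $s\le t$, hence $\int_{t'}^{t}A(s)\,ds\ge\tfrac{\epsilon}{p\,w(t)^{\theta p}}\int_{t'}^{t}s^{p-2}\,ds=\tfrac{\epsilon}{p\,w(t)^{\theta p}}\cdot\tfrac{t^{p-1}-(t')^{p-1}}{p-1}$, which is legitimate since $p=2^k\ge 2$. Writing $t^{p-1}-(t')^{p-1}=t^{p-1}\big(1-(1-p^{-\dw})^{p-1}\big)$ and combining $(1-x)^n\le e^{-nx}$ with $1-e^{-y}\ge y-\tfrac{y^2}{2}\ge\tfrac{y}{2}$ for $0\le y\le 1$ --- applicable because $(p-1)p^{-\dw}\le p^{1-\dw}\le\tfrac12$ when $\dw\ge 2$ and $p\ge 2$ --- one gets $1-(1-p^{-\dw})^{p-1}\ge\tfrac12\,(p-1)\,p^{-\dw}$. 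Assembling the pieces,
\[
 g(t)\ \ge\ \frac{\epsilon\theta\,t^{p-1}}{2\,p^{\dw}\,w(t)^{\theta p}}\,e^{-\theta\delta t},\qquad\text{that is,}\qquad u(t)^{\theta p}\ \le\ \frac{2\,p^{\dw}}{\epsilon\theta}\,t^{1-p}\,w(t)^{\theta p}\,e^{\theta\delta t},
\]
and taking $\theta p$-th roots gives exactly \eqref{e-fs}. (When $t=0$ the right-hand side of \eqref{e-fs} is $+\infty$, so there is nothing to prove.)

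The one genuinely nonroutine step --- the hard part --- is the choice $t'=t\,(1-p^{-\dw})$ as the lower limit of integration, which is precisely what pins down the correct exponential factor $e^{\delta t/p}$ needed for the iteration in the proof of Theorem \ref{t-main}, together with the elementary lower bound for $1-(1-p^{-\dw})^{p-1}$ that stays uniform enough in $p$; everything else is standard manipulation of a scalar linear ordinary differential inequality.
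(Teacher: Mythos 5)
Your proof is correct and follows essentially the same route as the paper: linearize the Bernoulli inequality via the substitution $u\mapsto u^{-\theta p}$ and gain the factor $e^{\delta t/p}$ by restricting the integration to the subinterval $[t(1-p^{-\dw}),t]$, which is exactly what the paper does in \eqref{e-fs2} via a change of variables after integrating from $0$. As a minor point in your favour, your derivation of $1-(1-p^{-\dw})^{p-1}\ge\tfrac12(p-1)p^{-\dw}$ from $(1-x)^n\le e^{-nx}$ and $1-e^{-y}\ge y/2$ is a clean and correct justification of the elementary bound that the paper invokes somewhat tersely after \eqref{e-fs2}.
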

\begin{proof}
 Set $v(t) = e^{- \delta p^{\dw-1}t} u(t)$. By \eqref{e-cks}, we have
 \[
  v'(t)= e^{- \delta p^{\dw-1}t} \left( u'(t) - \delta p^{\dw -1} u(t) \right) \le - \frac{\epsilon t^{p-2}}{p w(t)^{\theta p}} e^{\theta \delta p^{\dw}t} v(t)^{1+\theta p}.
 \]
Hence
\[
 \frac{d}{dt} \left( v(t) \right)^{-\theta p} \ge \epsilon \theta t^{p-2} w(t)^{-\theta p} e^{\theta \delta p^{\dw}t}
\]
and so, since $w$ is non-decreasing
\begin{equation} \label{e-fs1}
 e^{ \delta \theta p^\dw t} u(t)^{-\theta p} \ge \epsilon \theta w(t)^{-\theta p} \int_0^t s^{(p-2)}  e^{\theta \delta p^{\dw}s} \, ds.
\end{equation}
Note that
\begin{eqnarray}
\nonumber \int_0^t s^{(p-2)}  e^{\theta \delta p^{\dw}s} \, ds &\ge& (t/\delta \theta p^\dw)^{p-1} \int_{\delta \theta p^\dw(1- 1/p^\dw)}^{\delta \theta p^\dw} y^{(p-2)} e^{ty}\, dy \\
\nonumber &\ge & \frac{t^{p-1}}{p-1} \exp \left( \delta \theta p^\dw t -  \delta \theta t \right) \left[ 1 - (1- p^{-\dw})^{p-1} \right] \\
 \label{e-fs2} &\ge & \frac{t^{p-1}}{2p^{\dw}} \exp \left( \delta \theta p^\dw t - \delta \theta  t \right).
\end{eqnarray}
In the last line above, we used the bound $(1-p^{-\dw})^{p-1} \ge 1 - p^{-\dw}(p-1)$ for all $p,\dw \ge 2$.
Combining \eqref{e-fs1} and \eqref{e-fs2} yields \eqref{e-fs}.
\end{proof}
We now pick $f \in L^2(M,\mu)$ and $f \ge 0$ with $\norm{f}_2=1$.
Let $u_{k}(t)= \norm{f_{t,k-1}}_{p_{k}}$  and  let  \[w_k(t)= \sup \{ s^{ \df(p_k-2)/(2 \dw p_k)} u_k(s) : s \in (0,t] \}.\]
By \eqref{e-sr3}, $w_1(t) \le \exp( C_0 \lambda^\dw t /r^\dw)$. Further by \eqref{e-sr5}, $u_{k+1}$ satisfies \eqref{e-cks} with
$\epsilon = 1/C_A$, $\theta = 2 \dw/\df$, $\delta = C_0 (\lambda/r)^\dw$, $w=w_k$ and $p=p_k$. Hence by \eqref{e-fs},
\[
 u_{k+1}(t) \le ( 2^{\dw k +1}/\epsilon \theta)^{1/(\theta p_k)} t^{(1-p_k)/\theta p_k} e^{\delta t/p_k} w_k(t).
\]
Therefore
\[
 w_{k+1}(t)/w_k(t) \le ( 2^{\dw k +1}/\epsilon \theta)^{1/(\theta 2^k)} t^{-1/(2^k \theta)} e^{\delta t/2^k}
\]
for $k \in \N^*$. Hence, we obtain
\[
 \lim_{k \to \infty} w_{k}(t) \le   C_2 t^{-1/\theta} e^{\delta t}   w_1(t) \le C_2 t^{-1/\theta}  \exp( C_0 \lambda^ \dw t/r^\dw )
\]
where $C_2 =C_2( \dw, \epsilon, \theta)$.
Since $P_t$ is a contraction on all $L^p(M,\mu)$ for $1\le p \le \infty$, we obtain
\[
 \lim_{k \to \infty} u_k(t) = \norm{ P_t^{\psi_\infty}f}_\infty \le  \frac{C_2}{t^{\df/2\dw}} \exp(C_0 \lambda^ \dw t/r^\dw ).
\]
where $\psi_\infty=\lim_{k \to \infty} \psi_k$.
Since the above bound holds for all $f \in L^2(M,\mu)$ with $\norm{f}_2=1$, we have
\[
 \norm{P_t^{\psi_\infty}  }_{2 \to \infty} \le  \frac{C_2}{t^{\df/2\dw}} \exp( C_0 \lambda^ \dw t/r^\dw ).
\]
The estimate is unchanged if we replace $\psi_k$'s by $-\psi_k$. Since $P_t^{-\psi}$ is the adjoint of $P_t^\psi$, by duality we have that
\[
 \norm{P_t^{\psi_\infty}}_{1\to 2} \le  \frac{C_2}{t^{\df/2\dw}} \exp( C_0 \lambda^ \dw t/r^\dw ).
\]
Combining the above, we have
\begin{equation}
 \norm{P_t^{\psi_\infty} }_{1\to \infty} \le  \norm{P_{t/2}^{\psi_\infty} }_{1\to 2}  \norm{P_{t/2}^{\psi_\infty} }_{2 \to \infty} \le \frac{C_2 2^{\df/\dw}}{t^{\df/\dw}} \exp( C_0 \lambda^ \dw t/r^\dw ).
\end{equation}
Therefore
\[
 p_t(x,y) \le  \frac{C_2 2^{\df/\dw}}{t^{\df/\dw}} \exp( C_0 \lambda^ \dw t/r^\dw + \psi_\infty(y) - \psi_\infty(x) ).
\]
for all $x,y \in M$ and for all $r,t>0$ and $\lambda \ge 1$.
If we choose $r=d(x,y)/2$, we have  $\psi_\infty(y) - \psi_\infty(x)= -\lambda$. This yields
\[
 p_t(x,y) \le  \frac{C_3}{t^{\df/\dw}} \exp(C_4 \lambda^ \dw t/d(x,y)^\dw - \lambda ).
\]
where $C_3,C_4>1$.  Assume $\lambda = C_4^{-1/(\dw-1)} (d(x,y)^\dw/t)^{1/(\dw-1)} \ge 1$ in the above equation, we obtain
\[
  p_t(x,y) \le  \frac{C_3}{t^{\df/\dw}} \exp\left(- \left(\frac{d(x,y)^\dw}{C_5 t}\right)^{1/(\dw-1)}\right).
\]
for all $x,y \in M$ and for all $t>0$ such that $d(x,y)^\dw \ge C_4 t$.

If $d(x,y)^\dw < C_4 t$, the on-diagonal estimate \eqref{e-diag} suffices to obtain the desired sub-Gaussian upper bound.
\qed
\begin{remark}
The following generalized capacity estimate is a  weaker form of the  cutoff Sobolev inequality \ref{csa}, where the cutoff function $\phi$  is allowed to depend on the function $f$. This generalized capacity estimate was introduced by Grigor'yan, Hu and Lau and they obtain sub-Gaussian estimate under this weaker assumption \cite[Theorem 1.2]{GHL2}.
\begin{definition} \label{d-gcap}
We say \ref{gcap} holds if there exists $C_1,C_2>0$ such that for every $x \in M$, $R > 0$, $r>0$, $f \in \F$, there exists a cutoff function $\phi = \phi(f)$ for  $B(x,R) \subset B(x,R+r)$ such that 
\begin{equation} \label{gcap}  \tag*{$\operatorname{Gcap}(\dw)$}
 \int_U f^2 \, d \Gamma(\phi,\phi) \le C_1 \int_U \phi^2 \, d\Gamma(f,f) + \frac{C_2}{r^\dw} \int_U f^2 \, d\mu,
\end{equation}
where $U= B(x,R+r) \setminus \overline{B(x,r)}$.
\end{definition}
We refer the reader to \cite[Definition 1.5]{CKW} for an analogous generalized capacity estimate in a non-local setting.

It is an interesting open problem to modify the proof of Theorem \ref{t-main} under the above weaker assumption. The main difficulty for carrying out Davies' method under the weaker generalized capacity estimate assumption is that we require the inequalities \eqref{e-sr1} and \eqref{e-sr2} as $t>0$ varies. This would require the cutoff function $\psi_k$  to depend on $f_{t,k}$ for each $t$. Therefore the derivatives computed in \eqref{e-sr1} and \eqref{e-sr2} will have additional terms, since $\psi_k$ varies with time $t$. 

We were informed of the reference \cite{HL} by the referee during the revision stage. In \cite{HL}, the techniques developed here and in the companion paper \cite{MS4} is extended to  Dirichlet forms on metric measure spaces (possibly non-local) with jumps satisfying a polynomial type upper bound.
\end{remark}
\subsection*{Acknowledgement}
We thank Martin Barlow for his enlightening remarks on the cutoff Sobolev inequalities  and for suggesting the proof of Lemma \ref{l-sip} that replaced a more complicated proof in an earlier draft. We thank Evan Randles for clarifying various aspects of Davies' work \cite{Dav2} on higher order operators.
We thank Tom Hutchcroft for proofreading part of the manuscript.
We thank the referee for  useful remarks and suggestions that led to an improved presentation.
\bibliographystyle{amsalpha}

\end{document}